\definecolor{Black}{cmyk}{0,0,0,1}
\definecolor{OrangeRed}{cmyk}{0,0.6,1,0}            
\definecolor{DarkBlue}{cmyk}{1,1,0,0.20}
\definecolor{myblue}{rgb}{0.66,0.78,1.00}
\definecolor{Violet}{cmyk}{0.79,0.88,0,0}
\definecolor{Lavender}{cmyk}{0,0.48,0,0}
\newtheorem{theorem}{Theorem}[section]
\newtheorem{lemma}[theorem]{Lemma}
\newtheorem{corollary}[theorem]{Corollary}
\theoremstyle{definition}
\newtheorem{example}[theorem]{Example}
\newtheorem{remark}[theorem]{Remark}
\newtheorem{defn}[theorem]{Definition}
\newcommand{\C}{\mathbb{C}}
\newcommand{\N}{\mathbb{N}}
\renewcommand\Subset{\subset\subset}
\newcommand{\bea}{\begin{eqnarray*}}
\newcommand{\eea}{\end{eqnarray*}}
\newcommand{\htop}{h_{\operatorname{top}}}
\newcommand{\dist}{{\operatorname{dist}}}
\newcommand{\ra}{\rightarrow}
\newcommand{\diam}{\operatorname{diam}}
\newcommand{\diamE}{\operatorname{\diam_{\operatorname{Eucl}}}}
\numberwithin{equation}{section}
\renewcommand{\epsilon}{\varepsilon}
\renewcommand{\tilde}{\widetilde}
\title{Entropy of Transcendental entire functions}
\author[A.M. Benini]{Anna Miriam Benini$^{\dag}$}
\author[J.E.  Forn{\ae}ss ]{John Erik Forn{\ae}ss$^{*}$}
\author[H. Peters]{Han Peters}
\subjclass[2010]{30D20, 30D35, 37F10}
\keywords{Transcendental dynamics, Entropy}
\thanks{$^{\dag}$ This project has received funding from the European Union’s Horizon 2020 research and innovation programme under the Marie Sk{\l}odowska-Curie Grant Agreement No. 703269   COTRADY}
\thanks{$^{*}$  Supported by the NFR grant no. 10445200}
\address{ A.M. Benini: Dipartimento di Scienze Matematiche Fisiche e Informatiche\\
Universit\'a di Parma    \\
 Italy} \email{ambenini@gmail.com}
\address{ H. Peters: Korteweg de Vries Institute for Mathematics\\
University of Amsterdam\\
the Netherlands} \email{hanpeters77@gmail.com}
\address{ J.E. Fornaess: Department of Mathematical Sciences\\
NTNU Trondheim, Norway} \email{john.fornass@ntnu.no}
\date{\today}
\begin{document}

\begin{abstract}
We prove that all entire transcendental entire functions have infinite topological entropy.
\end{abstract}

\maketitle

\section{Introduction}

Topological entropy is a central property in dynamical systems and has been studied extensively, both in the complex setting and outside. More generally it was shown by Misiurewicz and Przytycki \cite{MisiurewiczPrzytycki} that for smooth self-maps of compact manifolds of topological degree $d$ the entropy is at least $\log(d)$. For polynomials and rational functions acting on the Riemann sphere, it was shown independently by Gromov (in a preprint from 1977, published in 2003 \cite{Gromov}) and Lyubich \cite{lyubich} that the topological degree is equal to $\log(d)$.

The goal in this paper is to determine the topological entropy of transcendental entire maps. Such maps have  infinite topological  degree, and hence one can expect that the topological entropy is also infinite. This is indeed the case, as we will prove here.

In \cite{Bergweiler} Bergweiler proved that the Ahlfors Five Islands Property implies for any transcendental function $f$ the existence of a bounded simply connected open set $D \subset \mathbb C$, and disjoint relatively compact subsets $U_1, U_2 \subset \subset D$ which are both being mapped univalently onto $D$ by some iterate $f^k$. As was pointed out by Dujardin in \cite{dujardin}, an immediate consequence is that the topological entropy of a transcendental function is always strictly positive. Since no bound on $k$ is given, the argument does not provide a definite lower bound on the entropy. The fact that the entropy is strictly positive follows also from the results by  \cite{Christensen}.

We will prove the following statement, which gives less information on the way the image covers the domain, but which does imply arbitrarily large lower bounds on the entropy.

\begin{theorem}\label{main}
Let $f$ be a transcendental entire function, and let $N \in \mathbb N$. There exists a non-empty bounded open set $V \subset \mathbb C$ so that $V \subset f(V)$, and such that any point in $V$ has at least $N$ preimages in $V$, counted with multiplicity.
\end{theorem}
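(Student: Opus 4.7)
The plan is to construct $V$ as a bounded open subset of $\mathbb C$ whose image contains $V$, with $f|_V$ covering $V$ with multiplicity at least $N$. My construction is modelled on the case $f=e^z$, where one can take $V=(-R,R)\times(1,1+2\pi N)$: its image under $e^z$ is the annulus $\{e^{-R}<|w|<e^R\}$ traced $N$ times around the origin, and for $R$ large this annulus contains $V$ spatially (since $|z|\geq 1>e^{-R}$ and $|z|\leq R+1+2\pi N<e^R$ for $z\in V$), giving $V\subset e^V$ with $N$-fold covering. The task is to produce an analogue of this construction for an arbitrary transcendental entire $f$.

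For general $f$ I would use the tract structure at infinity. For $R$ large, any unbounded connected component $T$ of $\{|f|>R\}$ is locally conjugate to an exponential-like model: there is a conformal map $\psi\colon T\to\mathbb H$ such that $f|_T$ is conjugate, via $\psi$, either to $\exp$ (if the tract is logarithmic) or to a high-degree monomial (if the tract is direct). Pulling back a rectangle or disk from the model through $\psi$ produces a candidate $V\subset T$ whose image $f(V)$ is an annulus (or disk) of large outer modulus wrapping at least $N$ times around its center. The spatial inclusion $V\subset f(V)$ in $\mathbb C$ then follows automatically, since $V$ sits somewhere inside $T$ while $f(V)$ has outer radius tending to infinity as the tract is penetrated deeply enough, so $f(V)$ envelops $V$.

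The main obstacle is to control the distortion of the conformal conjugacy $\psi$ sharply enough to preserve both the multiplicity $\geq N$ and the spatial inclusion $V\subset f(V)$; for logarithmic tracts this is classical, while for direct tracts a covering-space argument suffices, with a Rouch\'e computation on $\partial V$ confirming that the winding of $f|_{\partial V}$ matches that of the model. An alternative route that bypasses tract considerations is to invoke a Zalcman-type rescaling lemma (as in the work of Bergweiler--Eremenko), yielding sequences $z_n\to\infty$ and $\rho_n\to 0$ with $f(z_n+\rho_n\zeta)$ converging locally uniformly on any fixed disk to a polynomial $P$ of degree at least $N$ chosen in advance; pulling back a polynomial-like restriction of $P$ through the rescaling and using a Rouch\'e argument to handle the perturbation for $n$ large then produces the desired $V$ for $f$.
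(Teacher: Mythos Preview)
Both sketched routes have genuine gaps, and neither is close to a complete proof.

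\textbf{Tract approach.} Your claimed dichotomy ``$f|_T$ is conjugate to $\exp$ if the tract is logarithmic, or to a high-degree monomial if direct'' is not correct. For a general transcendental entire $f$ the tract $T$ (a component of $\{|f|>R\}$) may contain infinitely many critical points, so $f\colon T\to\{|w|>R\}$ need not be a covering at all, and there is no model map to conjugate to; the ``monomial'' alternative would require $f|_T$ to be proper of finite degree, which it is not. Functions outside the Eremenko--Lyubich class give exactly such examples. Even when a logarithmic tract exists and $f|_T=\exp\circ\phi$, the assertion that $V\subset f(V)$ ``follows automatically'' is unjustified: $V=\phi^{-1}(Q)$ sits wherever the Riemann map $\phi^{-1}$ happens to put it, while $f(V)$ is a round annulus about $0$; relating the two requires quantitative control of $\phi^{-1}$ that you have not supplied, and which can fail badly for thin or spiralling tracts.

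\textbf{Zalcman approach.} This fails for a structural reason. A Zalcman rescaling $g_n(\zeta)=f(z_n+\rho_n\zeta)\to g$ produces some non-constant entire limit $g$, but there is no mechanism to force $g$ to be a polynomial, let alone one of degree $\ge N$ ``chosen in advance''. More fatally, even if $g$ were such a polynomial with a polynomial-like restriction $g\colon U\to g(U)$, pulling back gives $V_n=z_n+\rho_n U$, a shrinking disk near $z_n\to\infty$, whereas $f(V_n)\approx g(U)$ is a fixed bounded set near the origin. The inclusion $V_n\subset f(V_n)$ is therefore impossible for large $n$: the rescaling decouples domain and range, which is precisely the opposite of what you need for a self-covering.

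The paper's argument avoids both issues by working not with tracts or rescalings but with Schottky-type bounds on the hyperbolic metric of $\mathbb C\setminus\{0,\alpha\}$: on a fixed simply connected $D_R\subset A_R=\{R/2<|z|<2R\}$ containing a point of very large modulus and a point of small modulus, either $f(D_R)\supset A_R$, or $f$ omits some $\alpha\in A_R$ and then covers $A_R\setminus\Delta(\alpha,R^{-j/2})$ at least $N$ times. A short case analysis (handling the possible omitted disk and the possibility that $A_R$ itself is covered fewer than $N$ times) then produces the required $V\subset A_R$ with $V\subset f(V)$. The key input you are missing is this covering lemma; nothing in your sketch substitutes for it.
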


The fact that $f$ has infinite entropy follows from the next statement. We refer to the Appendix for the proof and for the definition of  entropy.

\begin{theorem}\label{thm:covering implies entropy}
Let $V\subset \mathbb C$ be a bounded open set, and let $g: V \rightarrow \mathbb C$ be a holomorphic function, having a holomorphic continuation to a neighborhood of $\overline{V}$.
Suppose that every $w \in V$ has at least $N$ preimages in $V$, counted with multiplicity. Then the topological entropy of $g$ is at least $\log(N)$.
\end{theorem}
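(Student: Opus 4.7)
The plan is to produce, for every $n$ and some $\epsilon > 0$ independent of $n$, an $(n,\epsilon)$-separated subset of $V$ of cardinality at least $c N^n$; Bowen's definition of topological entropy then yields $h_{\mathrm{top}}(g) \geq \log N$. The candidates will be drawn from the iterated fibre $P_n := g^{-n}(w_0)\cap V$ of a suitably generic base point $w_0 \in V$. Since the critical values of all iterates $g^k$ form a countable set, one may choose $w_0$ to be a regular value of every $g^k$, in which case iterating the hypothesis gives that $P_n$ contains at least $N^n$ distinct points.

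The central technical input is an area estimate obtained via inverse branches. Fixing a small disk $D_0 = B(w_0, r) \Subset V$, compactly contained in a slightly larger auxiliary disk $D_0'$ on which the inverse branches of $g^n$ remain univalent (which can be arranged, after a Baire-generic restriction of $w_0$ ensuring that the orbits of the finitely many critical points of $g$ avoid $w_0$, with a uniform choice of $r$ in $n$), one has pairwise disjoint pullback disks $\phi_z^{(n)}(D_0) \subset V$ indexed by $z \in P_n$. The change-of-variables formula gives
\[
\sum_{z\in P_n}\int_{D_0}\bigl|(\phi_z^{(n)})'(w)\bigr|^2\,dA(w)\;\leq\;\mathrm{Area}(V),
\]
and Koebe distortion on $D_0'$ converts this into the pointwise bound $\sum_{z\in P_n}|(g^n)'(z)|^{-2}\leq C$, with $C$ independent of $n$. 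Markov's inequality then yields a subset $P_n^{\ast}\subset P_n$ of cardinality at least $N^n/2$ on which $|(g^n)'(z)|\geq\sqrt{N^n/(2C)}$; we call these preimages \emph{expanding}.

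The final and most delicate step is to show that (a suitable subset of) $P_n^{\ast}$ is $(n,\epsilon)$-separated for some fixed small $\epsilon>0$. The argument runs by backward induction: if $z_1,z_2\in P_n^{\ast}$ have $|g^k(z_1)-g^k(z_2)|<\epsilon$ for every $0\leq k\leq n$, both agree at time $n$ (where they equal $w_0$); assuming equality at time $k+1$, the points $g^k(z_1),g^k(z_2)$ are two $\epsilon$-close preimages under $g$ of a common regular value, so they must coincide once $\epsilon$ is smaller than the local inter-preimage distance of $g$ at that value. The main obstacle is securing a uniform lower bound on this local inter-preimage distance along the (infinite) backward orbit of $w_0$: it deteriorates as orbit points approach the finite critical set of $g$, where local degree $d$ forces distinct preimages to be only $\sim |w-w_{\mathrm{crit}}|^{1/d}$ apart. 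I would handle this by a supplementary pruning of $P_n^{\ast}$, discarding preimages whose orbits enter a fixed $\eta$-neighborhood of a critical value of $g$, and by a second area estimate (of the same flavor as the one above, applied to these $\eta$-neighborhoods) bounding the fraction discarded. The surviving subset still has cardinality at least $cN^n$, yielding the desired separated set and completing the argument.
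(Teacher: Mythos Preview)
Your backward-induction skeleton is the right one, and it is essentially what the paper does as well: two preimages in $P_n$ that are $\epsilon$-close along their entire orbits must agree step by step, provided the common orbit never comes too close to a critical value of $g$. The difficulty is entirely in controlling those close passes, and this is where your proposal breaks down.

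First, the area/Koebe paragraph is a red herring. Even granting a uniform constant $C$ (which already requires univalent inverse branches on a disk of size independent of $n$, something that fails whenever the postcritical set of $g$ is dense in $V$), the ``expanding'' property $|(g^n)'(z)|\ge \sqrt{N^n/(2C)}$ is never invoked in your separation argument, and I do not see how it could be: both endpoints satisfy $g^n(z_1)=g^n(z_2)=w_0$, so derivative growth of $g^n$ gives no separation.

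The real gap is the pruning step. You propose to discard those $z\in P_n$ whose orbit enters a fixed $\eta$-neighbourhood $B_\eta$ of the critical values, and claim an area estimate shows that at least $cN^n$ survive with $c>0$ independent of $n$. But for a typical $g$ the backward orbits of $w_0$ equidistribute, so the proportion of length-$n$ backward orbits that visit $B_\eta$ at least once tends to $1$ as $n\to\infty$; an area bound of the kind you sketch controls a weighted sum $\sum |(g^j)'|^{-2}$, not the \emph{number} of such visits. So the surviving set can be $o(N^n)$ and the argument collapses. The paper handles this issue quite differently: rather than forbidding visits to critical neighbourhoods, it observes that each non-periodic critical point $c$ is non-recurrent on some scale $\rho=\rho(m)$ for $m$ steps, so any backward orbit segment of length $m$ enters $\Delta(c,\rho)$ at most once. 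Hence along a length-$m$ block the total multiplicity loss is at most the product $D$ of the local degrees at the non-periodic critical points (periodic critical points are super-attracting and their neighbourhoods are avoided by all backward orbits of a generic $w_0$). This yields at least $N^m/D$ pairwise $(m,\epsilon(m))$-separated backward orbits from any starting point, and concatenating $k$ such blocks gives $(N^m/D)^k=(N/D^{1/m})^{km}$ separated orbits of length $km$; letting $m\to\infty$ kills the $D$ and gives $h_{\mathrm{top}}(g)\ge\log N$. The block/non-recurrence idea is exactly the missing ingredient in your proposal.
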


In the previous paper \cite{BFPViet} we treated the simpler case when the function $f$ omits some value. In this case the domains $V$ can be chosen equal to arbitrarily large annuli of fixed modulus. As will be pointed out in example \ref{example}, this cannot always be done for arbitrary transcendental functions. Instead, the domain $V$ that we construct is either a simply connected subdomain of some annulus, or equals a large disk.

\medskip

\noindent {\bf Acknowledgement:} The fact that transcendental functions have infinite entropy was proved independently by Markus Wendt. His result from 2005, whose proof relies upon Ahlfors Five Island Theorem, was never made public but is mentioned in his PhD thesis \cite[Beispiel 4.7.3]{Wendt}. We are grateful to Walter Bergweiler for bringing the work of Wendt to our attention.

\section{Proof of the main theorem}

\subsection*{Notation}

Throughout the paper we denote by $\Delta(z,r)$ the open Euclidean disk of radius $r>0$ centered at $z \in \mathbb C$. For a set $C\subset\C$ we denote by $\diamE C$ its Euclidean diameter.

For a hyperbolic domain $D\subset \C$ let us denote by $\rho_D (z)|dz|$ its Poincar\'e metric, where $\rho_D(z)$ is the hyperbolic density on $D$. For a subset $D'\subset D$, we denote by $\diam_D(D')$ the diameter of $D'$ in the Poincar\'e metric of $D$. Following \cite{Ahlfors} we will write $\Omega_{0,1}$ for the set $\mathbb C \setminus \{0,1\}$.

\subsection*{Estimates in the hyperbolic metric}
The following estimate on the density of the Poincar\'e metric of the twice puctured domain $\Omega_{0,1}$ is well known, see for example Theorem 1-12 in \cite{Ahlfors}. 

\begin{lemma}\label{lem: hyperbolic metric in two punctured plane}
The hyperbolic density satisfies
$$
\rho_{\Omega_{0,1}}(z)> \frac{1}{2|z|\ln|z|}
$$
for $|z|$ sufficiently large.
\end{lemma}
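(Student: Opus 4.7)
The plan is to establish the sharper asymptotic equivalence
\[
\rho_{\Omega_{0,1}}(z)\cdot |z|\ln|z| \longrightarrow 1 \quad\text{as } |z|\to\infty,
\]
from which the stated inequality with constant $\tfrac{1}{2}$ (indeed any constant strictly less than $1$) follows immediately for $|z|$ large enough.

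The first move is to transfer the problem from the puncture at $\infty$ to the puncture at $0$, where the geometry is easier to handle. The inversion $\phi(z)=1/z$ is a biholomorphism of $\Omega_{0,1}$: it exchanges the punctures $0$ and $\infty$ and fixes $1$. Being a conformal automorphism, $\phi$ is a hyperbolic isometry, and pulling back the Poincar\'e metric gives
\[
\rho_{\Omega_{0,1}}(z) = \frac{1}{|z|^2}\,\rho_{\Omega_{0,1}}(1/z).
\]
Substituting $w=1/z$ and noting that $\ln(1/|w|)=\ln|z|$ reduces the claim to the asymptotic $\rho_{\Omega_{0,1}}(w)\sim \frac{1}{|w|\ln(1/|w|)}$ as $w\to 0$.

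This is the classical asymptotic of the Poincar\'e metric near a cusp of a hyperbolic Riemann surface of finite type. The upper bound $\rho_{\Omega_{0,1}}(w)\leq \frac{1}{|w|\ln(1/|w|)}$ is automatic from Schwarz--Pick applied to the inclusion $\mathbb{D}\setminus\{0\}\hookrightarrow \Omega_{0,1}$, since the punctured unit disk has precisely this density. For the matching lower bound one appeals either to the explicit description of the universal cover via the elliptic modular $\lambda$-function, or to a Landau-type estimate: any holomorphic $f:\mathbb{D}\to\Omega_{0,1}$ with $f(0)=w$ satisfies $|f'(0)|\leq (1+o(1))|w|\ln(1/|w|)$ as $w\to 0$. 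Applied to a universal covering map $\pi:\mathbb{D}\to\Omega_{0,1}$, centred (by precomposing with an automorphism of $\mathbb{D}$) at a lift of $w$, this yields the required lower bound $\rho_{\Omega_{0,1}}(w)=1/|\pi'(0)|$.

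The main obstacle is precisely this lower bound: the upper bound comes for free, but an asymptotically sharp lower bound requires either a careful study of the $\lambda$-function near the cusp $i\infty$ of its fundamental domain, or a direct proof of an effective Landau-type theorem with the correct leading constant. This delicate estimate is exactly the content of Ahlfors' Theorem 1-12, which is why the statement is quoted rather than re-derived here.
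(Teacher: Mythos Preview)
Your proposal is correct and lands in the same place as the paper: the lemma is not proved from scratch but quoted from Ahlfors' Theorem~1-12 (with the paper additionally citing Hempel and Jenkins for the explicit threshold $\ln|z|\ge \Gamma^4(\tfrac14)/(4\pi^2)$). Your outline---reducing via the inversion $z\mapsto 1/z$ to the cusp asymptotic at $0$, getting the upper bound for free from the inclusion $\mathbb D\setminus\{0\}\hookrightarrow\Omega_{0,1}$, and isolating the lower bound as the genuine content of Ahlfors' result---is accurate and more informative than the bare citation, but ultimately both you and the paper defer to the same source for the key inequality.
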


In fact, more precise estimates by Hempel \cite{Hempel} and Jenkins \cite{Jenkins} show that the above equation holds whenever
$$
\ln|z| \ge K,
$$
where
$$
K = \frac{\Gamma^4(\frac{1}{4})}{4\pi^2} = 4.3768796\ldots,
$$
hence is satisfied when $|z| > e^5$.

From now on we let $D\subset\C$ be a hyperbolic domain, let $d > 0$, and let $C\subset D$   with $\diam_{D}C < d/2$.
 \begin{lemma}\label{lem:one large image for all}
Let $\alpha\in \mathbb C\setminus \{0\}.$ Let $f: D \ra\mathbb C\setminus \{0,\alpha\}$ be holomorphic. Then there exists $k>0$, depending only on $d$, such that the following holds:

If there exists $w_M\in C$ with  $ |f(w_M)| > M > k |\alpha|$, then
$$
|f(z)|>|\alpha|^{\frac{e^d-1}{e^d}} \cdot M^{1/e^d}
$$
for all $z\in C$.
\end{lemma}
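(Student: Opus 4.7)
My plan is to reduce to a map into $\Omega_{0,1}$ via rescaling, apply Schwarz--Pick to transfer the Poincar\'e diameter bound from $D$ to $\Omega_{0,1}$, and then use Lemma~\ref{lem: hyperbolic metric in two punctured plane} to convert the hyperbolic bound into a modulus bound.

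First, I would set $g(z) := f(z)/\alpha$, so that $g\colon D \to \Omega_{0,1}$ is holomorphic, and write $R := M/|\alpha|$, so the hypothesis becomes $R > k$ and $|g(w_M)| > R$. Schwarz--Pick then gives, for every $z \in C$,
$$d_{\Omega_{0,1}}(g(z), g(w_M)) \le d_D(z, w_M) \le \diam_D C < d/2.$$
The goal reduces to deducing $|g(z)| > R^{1/e^d}$, since multiplying back by $|\alpha|$ yields exactly $|f(z)| > |\alpha|^{(e^d-1)/e^d} M^{1/e^d}$.

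The key computation is the substitution $u = \ln|w|$, under which $|d|w||/(2|w|\ln|w|) = du/(2u)$, with antiderivative $\tfrac12 \ln\ln|w|$. Combined with $|dw| \ge |d|w||$ and Lemma~\ref{lem: hyperbolic metric in two punctured plane}, this shows that any path in $\Omega_{0,1}$ joining points $w_0, w_1$ and staying in $\{|w| > e^5\}$ has hyperbolic length at least $\tfrac12 |\ln\ln|w_1| - \ln\ln|w_0||$. Once I know $|g(z)| > e^5$, combining this estimate with the Schwarz--Pick bound above yields immediately $\ln|g(w_M)| < e^d \ln|g(z)|$, hence $|g(z)| > R^{1/e^d}$, and the lemma is proved.

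The main obstacle is therefore ensuring $|g(z)| > e^5$ for every $z \in C$. I would arrange this by choosing $k > e^{5 e^d}$ and arguing by contradiction: if $|g(z_0)| \le e^5$ for some $z_0 \in C$ while $|g(w_M)| > R > e^{5 e^d}$, then any path $\gamma$ in $\Omega_{0,1}$ from $g(z_0)$ to $g(w_M)$ must cross the circle $\{|w| = e^5\}$, and the subpath of $\gamma$ from the last such crossing to $g(w_M)$ stays inside $\{|w| \ge e^5\}$ and connects modulus $e^5$ to modulus $> R$. The length estimate of the previous paragraph then forces $d_{\Omega_{0,1}}(g(z_0), g(w_M)) \ge \tfrac12 (\ln\ln R - \ln 5) > d/2$, contradicting Schwarz--Pick. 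With this choice of $k$ the required conclusion follows.
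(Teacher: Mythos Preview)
Your argument is correct and follows essentially the same route as the paper: rescale by $\alpha$ to land in $\Omega_{0,1}$, apply Schwarz--Pick to bound the hyperbolic diameter of $g(C)$, and integrate the density estimate of Lemma~\ref{lem: hyperbolic metric in two punctured plane} to get the modulus inequality. The only difference is cosmetic: where the paper invokes completeness of $\Omega_{0,1}$ to assert the existence of $k$ forcing $g(C)\subset\{|w|>e^5\}$, you instead reuse the density estimate to produce the explicit choice $k>e^{5e^d}$, which is a perfectly good (indeed more quantitative) way to justify the same step.
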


\begin{proof}
Let us first suppose that $\alpha=1$. Since $\Omega_{0,1}$ is a complete metric space, for any $d$ there exists $k>0$ such that if $|f(w_M)|>k$ then $f(C)$ is contained in the disk $|z|>e^5$. Since holomorphic maps are distance decreasing
$$
\diam_{f(D)} f(C)< d/2,
$$
and hence
$$
\diam_{\mathbb C \setminus \{0,1\}} f(C) < d/2.
$$
and in particular
$$
\mathrm{dist}_{\mathbb C \setminus \{0,1\}} (f(z), f(w_M)) < d/2
$$
for any $z \in C$.
By Lemma~\ref{lem: hyperbolic metric in two punctured plane} and the fact that $f(C)$ is contained in the disk $|z|>e^5$ it follows that
$$
\begin{aligned}
d/2 & > \dist_{ \C\setminus \{0,1\}}(f(z),f(w_M))\geq\int_{|f(z)|}^{|f(w_M)| }\frac{1}{2t\ln t}\\
&=\frac{1}{2}(\ln \ln |f(w_M)|-\ln \ln |f(z)|),
\end{aligned}
$$
which gives
\bea
|f(z)| > \exp(\exp(\ln \ln |f(w_M|-d))
& = & |f(w_M)|^{1/e^d} > |M|^{1/e^d}\\
\eea

When $\alpha\neq 1$ the result follows directly by considering the function $f(z)/\alpha.$
\end{proof}

From now on we define $k>0$ as in the above lemma, depending on $d$.

\begin{corollary}\label{cor:non-omitted values}
Let $f: D \ra\mathbb C\setminus \{0\}$ be holomorphic, let $w_M\in C$ and write $M = |f(w_M)|$. Let $|\alpha| < M/k$. If there is $z\in C$ so that $|f(z)|\leq |\alpha|^{1-\frac{1}{e^d}}M^{1/e^d}$,
then there exists $z\in D$ so that $f(z)=\alpha.$
\end{corollary}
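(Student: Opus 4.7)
The corollary is essentially just the contrapositive of Lemma \ref{lem:one large image for all}, so the plan is very short.

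The plan is to argue by contradiction. Suppose, toward a contradiction, that $\alpha$ is not in the image $f(D)$. Since we are already assuming $f$ avoids $0$, this means that $f$ can be viewed as a holomorphic map
$$
f : D \longrightarrow \mathbb{C} \setminus \{0, \alpha\}.
$$
Now I would check that the hypotheses of Lemma \ref{lem:one large image for all} are satisfied with this $\alpha$ and this value of $M$. The diameter assumption $\diam_D C < d/2$ is in force throughout the subsection, and the assumption $|\alpha| < M/k$ is exactly the inequality $M > k |\alpha|$ needed in the lemma, with $w_M \in C$ being the designated point where $|f(w_M)| = M$.

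Applying Lemma \ref{lem:one large image for all} then forces the lower bound
$$
|f(z)| > |\alpha|^{\frac{e^d - 1}{e^d}} \, M^{1/e^d} = |\alpha|^{1 - \frac{1}{e^d}} \, M^{1/e^d}
$$
to hold for \emph{every} $z \in C$. But the hypothesis of the corollary provides a point $z \in C$ with $|f(z)| \leq |\alpha|^{1 - \frac{1}{e^d}} M^{1/e^d}$, which directly contradicts this inequality. Hence $\alpha$ must lie in $f(D)$, as claimed.

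There is no real obstacle here: the only nontrivial content is already packaged in Lemma \ref{lem:one large image for all}, and this corollary is just its reformulation, observing that if the conclusion of the lemma fails at some point of $C$, then $\alpha$ cannot have been an omitted value.
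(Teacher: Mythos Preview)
Your proof is correct and follows exactly the same approach as the paper: argue by contradiction, view $f$ as a map into $\mathbb{C}\setminus\{0,\alpha\}$, and apply Lemma~\ref{lem:one large image for all} to obtain the lower bound on $|f(z)|$ that contradicts the hypothesis. The paper's version is just more terse.
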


\begin{proof}
If there is no $z\in D$ so that $f(z)=\alpha,$ then $f:D\rightarrow \C\setminus \{0,\alpha\}$. Moreover $|f(w_M)|=M>k|\alpha|.$
Hence $|f(z)|>|\alpha|^{1-1/e^d}M^{1/e^d}$ for all $z\in C$, a contradiction.
\end{proof}
The following covering lemma bears similarities with Theorem 2.2 in \cite{RS}.
\begin{lemma}\label{thm:covering an annulus}
Let $f: D \ra\mathbb C\setminus \{0\}$ be holomorphic. Let $0 \le m < M$ be such that there exists   $w_M, w_m\in C$ with  $ |f(w_m)|=m$ and  $ |f(w_M)|=M$.
Then $f(D)$ contains the annulus
$$
A=\Big\{\left(\frac{m^{e^d}}{M}\right)^{\frac{1}{e^d-1}}< |z|< M/k\Big\}.
$$
\end{lemma}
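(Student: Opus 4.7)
The plan is to apply Corollary~\ref{cor:non-omitted values} directly, taking $w_m$ as the witness point in $C$. Fix any $\alpha$ strictly inside the annulus $A$, so $\alpha\neq 0$ and
$$
\left(\frac{m^{e^d}}{M}\right)^{\frac{1}{e^d-1}} < |\alpha| < \frac{M}{k}.
$$
I want to show $\alpha\in f(D)$.

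First, since $|\alpha|<M/k$, the hypothesis on $\alpha$ of Corollary~\ref{cor:non-omitted values} is satisfied. It remains to exhibit a point $z\in C$ with $|f(z)|\le |\alpha|^{1-1/e^d}M^{1/e^d}$; if I succeed, the corollary will produce the desired preimage of $\alpha$ in $D$. The natural candidate is $z=w_m$, for which $|f(w_m)|=m$ by assumption.

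Thus the whole proof reduces to the algebraic verification that the lower bound in the annulus $A$ is exactly equivalent to
$$
m \;\le\; |\alpha|^{1-\frac{1}{e^d}}\,M^{\frac{1}{e^d}}.
$$
Raising both sides to the $e^d$-th power, this is $m^{e^d}\le |\alpha|^{e^d-1} M$, i.e. $|\alpha|^{e^d-1}\ge m^{e^d}/M$, which upon taking the $(e^d-1)$-th root is precisely $|\alpha|\ge (m^{e^d}/M)^{1/(e^d-1)}$, the left-hand inequality defining $A$. Hence Corollary~\ref{cor:non-omitted values} applies and $\alpha\in f(D)$.

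There is no real obstacle here; the lemma is essentially a repackaging of Corollary~\ref{cor:non-omitted values}, with the annulus obtained by solving, in $|\alpha|$, the inequality coming from the corollary's hypothesis when we feed it the value $|f(w_m)|=m$. The only subtlety is to notice that the upper bound $|\alpha|<M/k$ is needed to invoke the corollary, and that $w_m\in C$ makes it admissible as the point $z$ in that corollary's hypothesis.
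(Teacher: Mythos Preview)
Your proof is correct and is essentially the same as the paper's: both reduce the statement to Corollary~\ref{cor:non-omitted values} via the same algebraic manipulation, the only difference being that the paper argues by contrapositive (if $\alpha\notin f(D)$ then $\alpha\notin A$) while you argue directly. One very minor remark: the annulus gives the strict inequality $m<|\alpha|^{1-1/e^d}M^{1/e^d}$, which of course implies the non-strict inequality needed in the corollary.
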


\begin{proof}
Let $\alpha \neq 0$ and suppose that $\alpha\notin f(D)$.
By Corollary~\ref{cor:non-omitted values}, if $|\alpha|<M/k$ we have
$$
m\geq |\alpha|^{\frac{e^d-1}{e^d}}M^{1/e^d}
$$
which gives
$$
|\alpha|\leq \left(\frac{m^{e^d}}{M}\right)^{\frac{1}{e^d-1}},
$$
which implies that $\alpha\notin A.$
\end{proof}

From now on we assume that the domain $D\subset \mathbb C$ is simply connected.

\begin{theorem}
Let $f: D \ra\mathbb C\setminus \{0\}$ be holomorphic in a neighborhood of $D$.
Let $0 \le m < M$ be such that there exists   $w_M, w_m\in C$ with  $ |f(w_m)|=m$ and  $ |f(w_M)|=M$.

Let $N \in \mathbb N$ and define
$$
A_N=\Big\{\left(\frac{m^{e^d}}{M}\right)^{\frac{1}{e^d-1}}
< |z|< M/k^N\Big\}.
$$
Then every $\alpha \in A_N$ has at least $N$ distinct preimages in $D$.
\end{theorem}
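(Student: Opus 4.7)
My plan is to reduce to the case $N=1$ --- which is exactly Lemma~\ref{thm:covering an annulus} --- by extracting an $N$-th root of $f$. Since $D$ is simply connected and $f$ is holomorphic and nowhere zero on a neighborhood of $\overline D$, I can pick a single-valued holomorphic branch $\tilde f := f^{1/N}$ on a simply connected open set containing $\overline D$, giving $\tilde f : D \to \mathbb C\setminus\{0\}$ holomorphic in a neighborhood of $D$. The set $C$ and its hyperbolic diameter in $D$ are unchanged, so the constant $k=k(d)$ from Lemma~\ref{lem:one large image for all} applies to $\tilde f$ with no modification.

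Next I would apply Lemma~\ref{thm:covering an annulus} to $\tilde f$ with $\tilde m := m^{1/N} = |\tilde f(w_m)|$ and $\tilde M := M^{1/N} = |\tilde f(w_M)|$: this gives $\tilde f(D) \supset \widetilde A$, where $\widetilde A = \big\{(m^{e^d}/M)^{1/(N(e^d-1))} < |z| < M^{1/N}/k \big\}$. Raising both inequalities to the $N$-th power shows that $\beta \in \widetilde A$ if and only if $\beta^N \in A_N$. Hence for any $\alpha \in A_N$, every one of the $N$ distinct $N$-th roots $\alpha^{1/N}\zeta^{j}$ (with $\zeta = e^{2\pi i/N}$ and $j = 0, \ldots, N-1$) lies in $\widetilde A$ and therefore has at least one preimage $z_j \in D$ under $\tilde f$. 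Since the roots are pairwise distinct, so are the points $z_j$, and $f(z_j) = \tilde f(z_j)^N = \alpha$ gives $N$ distinct preimages of $\alpha$ under $f$, as required.

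I do not foresee any substantive obstacle here. The only routine check is that $\tilde f$ be holomorphic on a \emph{neighborhood} of $D$ so that Lemma~\ref{thm:covering an annulus} applies directly; this is automatic from compactness of $\overline D$ and nonvanishing of $f$ on some slightly larger simply connected neighborhood, where one can take the holomorphic $N$-th root. As a sanity check, the reduction also clarifies the shape of $A_N$: the outer radius $M/k^N$ is precisely what the $N$-th root substitution turns the one-preimage outer bound $M^{1/N}/k$ into, and the lower radius is invariant under the substitution.
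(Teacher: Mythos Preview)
Your argument is correct and is essentially identical to the paper's: take an $N$-th root $g=f^{1/N}$ on the simply connected domain $D$, apply Lemma~\ref{thm:covering an annulus} to $g$ with $m^{1/N},M^{1/N}$, and observe that the $N$ distinct $N$-th roots of any $\alpha\in A_N$ all lie in the resulting covered annulus. The one unnecessary wrinkle is your concern about extending the $N$-th root to a neighborhood of $\overline D$ (the hypotheses do not guarantee $f\neq 0$ there, and Lemma~\ref{thm:covering an annulus} does not require it); simply taking the root on $D$ itself suffices.
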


\begin{proof}
If $A_N$ is empty there is nothing to prove. Otherwise, since $f$ omits $0$ and $D$ is simply connected
we can choose an $N^{th}$-root $g=f^{1/N}$.
Observe that $|g(w_M)|=M^{1/N}$, and that $|g(w_m)|=m^{1/N}$.
Let $\alpha\in A_N$. Let  $\{\eta_j\}_{j=1\ldots N}$ be the  $N$-th roots of $\alpha.$
Let
$$
B=\Big\{\left(\frac{m^{e^d/N}}{|g(w_M)|}\right)^{\frac{1}{e^d-1}}< |z|< |g(w_M)|/k
\Big\}.
$$
Since $\alpha\in A_N$,    $\eta_j\in B$ for all $j$.
By Lemma~\ref{thm:covering an annulus} , for each $j=1\ldots N$ there is $z_j\in D$ so that $g(z_j)=\eta_j.$ By definition of $g$, $f(z_j)=\alpha.$
\end{proof}

The following is immediate, replacing $0$ by any complex number.

\begin{theorem}\label{thm:covering annulus near alpha}
Let $f: {D} \ra\mathbb C\setminus \{\alpha\}$ be holomorphic in a neighborhood of $D$ with $\alpha\in\C$. Let $0<m<M$ be such that there exists   $w_m, w_M\in C$ with $ |f(w_m)|=m$ and $|f(w_M)|=M$. 

Fix $N\in\N$. Let
$$
A_N=\Big\{
\left(\frac{(m+|\alpha|)^{e^d}}{|M-|\alpha||}\right)^{\frac{1}{e^d-1}}
< |z-\alpha|< |M-|\alpha||/k^N\Big\}.
$$
Then every point in  $A_N$ has at least $N$ distinct preimages in $D$.
\end{theorem}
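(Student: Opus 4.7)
The plan is the direct reduction that the author alludes to: translate by $\alpha$. Set $g(z) := f(z) - \alpha$, which is holomorphic in a neighborhood of $D$ and, since $f$ omits $\alpha$, maps $D$ into $\mathbb C \setminus \{0\}$. Thus $g$ is of the type considered in the preceding theorem, and the entire proof consists of choosing the right substitute parameters to invoke that theorem.

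For the substitute heights I would take $m' := |g(w_m)| = |f(w_m) - \alpha|$ and $M' := |g(w_M)| = |f(w_M) - \alpha|$, so that the existence clause of the preceding theorem is tautologically satisfied with the same points $w_m, w_M \in C$. The triangle inequality yields
$$
m' \le m + |\alpha| \qquad \text{and} \qquad M' \ge \bigl||f(w_M)| - |\alpha|\bigr| = |M - |\alpha||.
$$
If $A_N$ is empty there is nothing to prove. Otherwise, rearranging the non-emptiness condition for $A_N$ and using $k \ge 1$ shows that $m + |\alpha| < |M - |\alpha||$, which combined with the two displayed inequalities gives $0 \le m' < M'$. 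All hypotheses of the preceding theorem are therefore met for $g$ with the parameters $m'$, $M'$.

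Applying the preceding theorem, every point $w$ in the annulus
$$
B_N := \left\{\left(\frac{(m')^{e^d}}{M'}\right)^{\frac{1}{e^d-1}} < |w| < M'/k^N\right\}
$$
has at least $N$ distinct preimages in $D$ under $g$. The bounds $m' \le m + |\alpha|$ and $M' \ge |M-|\alpha||$ weaken each endpoint of $B_N$ compared with the corresponding endpoint of $A_N$ (after translation by $-\alpha$), yielding the inclusion $\{z - \alpha : z \in A_N\} \subset B_N$. Since the preimages of $z - \alpha$ under $g$ coincide with the preimages of $z$ under $f$, each $z \in A_N$ has at least $N$ distinct preimages under $f$, completing the argument. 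The only "obstacle" here is the purely algebraic bookkeeping used to verify the non-emptiness rearrangement and the resulting inclusion of annuli.
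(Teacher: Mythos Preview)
Your argument is correct and is exactly the translation $g=f-\alpha$ that the paper has in mind when it says the result ``is immediate, replacing $0$ by any complex number''; you have simply written out the triangle-inequality bookkeeping that makes this precise. The only point worth noting is that $k>1$ (in fact $k>e^5$ from the proof of Lemma~\ref{lem:one large image for all}), which you use implicitly, is indeed guaranteed by the construction of $k$.
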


For $R>0$ define the annulus
$$
A_R  :=\{R/2<|z|<2R\}
$$

\begin{corollary}\label{cor: general covering dichotomy new try} 
Let $f:D\ra\C$ be holomorphic in a neighborhood of $D$.
Let $0<m<M$ be such that there exists   $w_m, w_M\in C$ with  $|f(w_m)|=m$ and $|f(w_M)|=M$. Let $k=k(d)$ be as in Lemma~\ref{lem:one large image for all}.

Fix $N \in \mathbb N$, and let $R,j$ such that $k^N< R^{j/2}$. Suppose that $m,M$ satisfy the conditions:
$$
\frac{|M-2R|}{k^N}> 4R
$$
and
$$
\left(\frac{(m+2R)^{e^d}}{|M-2R|}\right)^{\frac{1}{e^d-1}}<\frac{1}{R^{j/2}}.
$$
Then either $A_R \subset f(D)$, or else there exists $\alpha \in A_R\setminus f(D)$ so that
$$
\left(A_R \setminus \Delta(\alpha, \frac{1}{R^{j/2}} )\right) \subset f(D).
$$
In the latter case each $\beta \in A_R \setminus \Delta(\alpha, \frac{1}{R^{j/2}} )$ has at least $N$ distinct  preimages in $D$.
\end{corollary}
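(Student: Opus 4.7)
The plan is to prove the statement by a direct dichotomy. Either $A_R \subset f(D)$, in which case the first alternative holds and nothing more is needed, or there exists a point $\alpha \in A_R \setminus f(D)$. In the latter case I will apply Theorem~\ref{thm:covering annulus near alpha} at this omitted $\alpha$ and verify that the annulus it produces contains $A_R \setminus \Delta(\alpha, R^{-j/2})$.

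Assuming the second alternative, we may regard $f$ as a map $D \to \C \setminus \{\alpha\}$, since $\alpha \notin f(D)$. Applying Theorem~\ref{thm:covering annulus near alpha} with the same data $w_m, w_M, m, M, N$ produces an annulus
$$
A_N(\alpha) = \Big\{ r_{\mathrm{in}} < |z - \alpha| < r_{\mathrm{out}} \Big\},
$$
with
$$
r_{\mathrm{in}} = \Bigl(\tfrac{(m+|\alpha|)^{e^d}}{|M-|\alpha||}\Bigr)^{\frac{1}{e^d-1}}, \qquad r_{\mathrm{out}} = \tfrac{|M-|\alpha||}{k^N},
$$
such that every point of $A_N(\alpha)$ has at least $N$ distinct preimages in $D$.

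It then remains to establish the inclusion $A_R \setminus \Delta(\alpha, R^{-j/2}) \subset A_N(\alpha)$. Since $\alpha \in A_R$ we have $|\alpha| < 2R$. Because $k > 1$ (by its definition in Lemma~\ref{lem:one large image for all}), the first hypothesis forces $M > 2R$, so $|M - |\alpha|| = M - |\alpha| \ge M - 2R > 4R k^N$. For any $\beta \in A_R$ the triangle inequality gives $|\beta - \alpha| \le |\beta| + |\alpha| < 4R < r_{\mathrm{out}}$, which is the outer inclusion. For the inner inclusion, monotonicity in $|\alpha|$ together with the second hypothesis gives
$$
r_{\mathrm{in}} \le \Bigl(\tfrac{(m+2R)^{e^d}}{|M-2R|}\Bigr)^{\frac{1}{e^d-1}} < R^{-j/2},
$$
so any $\beta$ outside $\Delta(\alpha, R^{-j/2})$ lies beyond the inner boundary of $A_N(\alpha)$. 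The conclusion then follows from Theorem~\ref{thm:covering annulus near alpha} applied to this $\alpha$.

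The argument is essentially bookkeeping of inequalities, so the main — although minor — obstacle is to confirm that the two given hypotheses are exactly what is required to push the monotone estimates in the correct direction; in particular that $|M - |\alpha||$ reduces to the positive quantity $M - |\alpha|$, so that both $r_{\mathrm{in}}$ and $r_{\mathrm{out}}$ behave monotonically in $|\alpha| \le 2R$. The ancillary condition $k^N < R^{j/2}$ plays the supporting role of ensuring that the excised disk $\Delta(\alpha, R^{-j/2})$ is small relative to $r_{\mathrm{out}}$, so the resulting set is genuinely a large portion of $A_R$ rather than being vacuous.
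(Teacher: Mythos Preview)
Your proof is correct and follows exactly the same approach as the paper: set up the dichotomy, apply Theorem~\ref{thm:covering annulus near alpha} at an omitted $\alpha\in A_R$, and verify the inclusion $A_R\setminus\Delta(\alpha,R^{-j/2})\subset A_N(\alpha)$ using the two hypotheses on $m,M$. The paper's proof is terser, simply asserting that ``the conditions on $m,M$ imply'' the inclusion, whereas you spell out the monotonicity in $|\alpha|$ and the triangle-inequality bound $|\beta-\alpha|<4R$ for the outer radius; this is a welcome elaboration rather than a different argument.
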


\begin{proof}
%
%

If $f(D)\supset A_R$ there is nothing to prove. Otherwise there is $\alpha\in A_R\setminus f(D)$ and we are in the case that $f:D\ra\C\setminus\{\alpha\}$, hence Theorem~\ref{thm:covering annulus near alpha} applies, with $|\alpha|<2R$. In particular $f(D)$ covers at least $N$ times the annulus   $A_N$ defined in Theorem~\ref{thm:covering annulus near alpha}.
The conclusion follows by observing that the conditions on $m,M$ imply  that $\left(A_R\setminus\Delta(\alpha, \frac{1}{R^{j/2}} ) \right)\subset A_N.$
\end{proof}

For $R>0$ and $\theta \in [0, 2\pi]$ we define

\begin{equation*}
\begin{aligned}
D_R & :=\{R/2+1/9<|z|<2R-1/9\,, \; |\mathrm{Arg}(z) - \theta| < 3\pi/4\}, \; \; \mathrm{and}\\
C_R & :=\{2R/3<|z|<3R/2 \, , \; |\mathrm{Arg}(z) - \theta| < 2\pi/3\}.
\end{aligned}
\end{equation*}

Note that $D_R$ is simply connected, that $C_R \subset D_R \subset A_R$, and that $C_R$ has finite hyperbolic diameter in $D_R$, say $d/2$, which is independent from $R$ and $\theta$. From now on we let $k>0$ be the corresponding constant found in Lemma~\ref{lem:one large image for all}.

\begin{theorem}\label{thm:covering dichotomy}
Let $f$ be a transcendental entire function. Let $N\in\N$.
Then there exist arbitrarily large $R$ and $j$ large and $\theta \in [0, 2\pi]$ so that either $A_R \subset f(D_R)$ or else there exists  $\alpha \in A_R\setminus f(D_R)$  so that $\left(A_R \setminus \Delta(\alpha, \frac{1}{R^{j/2}} )\right) \subset f(D_R)$.  In the latter case, each $\beta\in\left(A_R \setminus \Delta(\alpha, \frac{1}{R^{j/2}} )\right) $ has at least $N$ distinct preimages in $D_R$.
\end{theorem}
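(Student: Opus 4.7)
The plan is to apply Corollary~\ref{cor: general covering dichotomy new try} with $D = D_R$ and $C = C_R$; the two alternatives in the theorem are exactly the two cases of that corollary. The problem therefore reduces to exhibiting, for arbitrarily large $R$, a large integer $j$ and an angle $\theta \in [0,2\pi]$, together with points $w_m, w_M \in C_R$ whose moduli $0 < m = |f(w_m)| < M = |f(w_M)|$ verify the three numerical hypotheses of that corollary.

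The first step is to produce a sequence $z_n \to \infty$ with $0 < |f(z_n)| \leq 1$. I argue by contradiction: if $\{|f| \leq 1\}$ were bounded, then $f$ would have only finitely many zeros and would factor as $f = p \cdot e^g$ with $p$ polynomial and $g$ entire. The bound $|f| > 1$ outside a large disk then yields $|e^{-g(z)}| < |p(z)|$ for $|z|$ large, so the entire function $p \cdot e^{-g}$ would have polynomial growth and hence be a polynomial. Consequently $e^{-g}$ would be a rational function and, being entire and nowhere zero, a nonzero constant; thus $g$ would be constant and $f$ a polynomial, contradicting transcendentality. A tiny perturbation near isolated zeros of $f$ ensures $|f(z_n)| > 0$ when needed.

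Next I set $R_n = |z_n|$, $w_{m,n} = z_n$, and pick $w_{M,n}$ on $\{|z| = R_n\}$ with $|f(w_{M,n})| = M(R_n) := \max_{|z| = R_n} |f|$. The shorter angular distance between $\arg z_n$ and $\arg w_{M,n}$ is at most $\pi$; letting $\theta_n$ be the midpoint of the corresponding shorter arc places both of these angles within angular distance $\pi/2 < 2\pi/3$ of $\theta_n$. Since $|w_{m,n}| = |w_{M,n}| = R_n \in (2R_n/3, 3R_n/2)$, both points lie in the sector $C_{R_n}$ associated to $\theta_n$.

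It remains to pick $j_n \to \infty$ and check the three conditions of the corollary. With $m_n = |f(w_{m,n})| \leq 1$, one has $(m_n + 2R_n)^{e^d} \leq (3R_n)^{e^d}$, so all three hypotheses reduce to $M(R_n)$ dominating a power of $R_n$ of the form $R_n^{c + j_n(e^d - 1)/2}$ for some fixed $c = c(d,N)$. Since $f$ is transcendental, $\log M(R)/\log R \to \infty$, and setting $j_n = \lfloor \log M(R_n)/\log R_n \rfloor$ (with an additive correction to absorb $c$) makes all three conditions hold for $n$ large while guaranteeing $j_n \to \infty$. Applying the corollary then yields the theorem. The main conceptual step is the first one: extracting a large-modulus point at which $|f|$ is bounded is not automatic from transcendentality and depends on the factorization argument above. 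Once it is in place, the choice of $\theta$ is a purely geometric remark, and the remaining verification is a routine comparison of orders of growth.
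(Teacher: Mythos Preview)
Your argument is correct and follows the paper's overall strategy: exhibit points $w_m,w_M$ on the circle $|z|=R$ with $|f(w_m)|$ bounded and $|f(w_M)|=M(R)$, choose $\theta$ so that both lie in $C_R$, and then feed these data into Corollary~\ref{cor: general covering dichotomy new try}; the growth comparison $\log M(R)/\log R\to\infty$ takes care of the three numerical hypotheses exactly as you say.

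The one genuine difference is how you obtain $w_m$. The paper simply quotes Picard's theorem: $f$ assumes every value, with at most one exception, infinitely often, so for arbitrarily large $R$ there is a point on $|z|=R$ with $|f|<3$. You replace this by a self-contained factorisation argument: if $\{|f|\le 1\}$ were bounded, then $f$ would have finitely many zeros, hence $f=pe^{g}$ with $p$ polynomial, and $|e^{-g}|<|p|$ near infinity forces $e^{-g}$ (entire, polynomial growth, nowhere zero) to be constant, contradicting transcendentality. This is a more elementary route, needing only Liouville-type estimates rather than Picard, at the cost of a few extra lines; the paper's version is a one-line citation. Either way one gets a sequence $R_n\to\infty$ with $m_n\le 1$, and the rest of the two proofs coincide.

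One remark: the paper actually proves the theorem simultaneously with the accompanying Remark on the positions of the $w_i$ inside $C_R$ (two maximum-modulus points at mutual distance $\ge R/10$, and $w_m$ at distance $\ge R/10$ from $\partial C_R$), which is needed later in the proof of the main theorem. Your argument establishes the theorem as stated but does not supply these extra geometric conditions; if you later need them, the midpoint choice of $\theta$ has to be refined slightly.
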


\begin{remark}\label{rem:position of wi}
We can moreover guarantee that $C_R$ contains at least two points of maximum modulus which are at least $R/10$ apart from each other as well as from the boundary of $C_R$,  and that there is a point $w_m\in C_R$ with $|f(w_m)|<3$ and whose distance from $\partial C_R$  is at least $R/10$.
\end{remark}

\begin{proof}[Proof of Theorem~\ref{thm:covering dichotomy} and Remark~\ref{rem:position of wi}]
Observe that the hypotheses on $m$ and $M$ in Corollary~\ref{cor: general covering dichotomy new try}  are satisfied provided that there exists $w_m, w_M\in C_R$ such that   $|f(w_M)|=M>R^j$ and $|f(w_m)|=m<3R$ for large enough $R,j$. Since the maximum modulus of $f$ on $\{|z| = R\}$ grows faster than any polynomial in $R$, for $R$ large enough we can always assume that there is a point $w_M$ with $|w_M| = R$ and $|f(w_M)|>R^j$.

By Picard's Theorem, $f$ takes on every value infinitely many times except at most one value, so we can choose arbitrarily large $R$ so that there also exists a point $w_m$ with $|w_m| = R$ and $|f(w_m)|= m < 3 R$.
Since $C_R$ contains strictly more than half the circle $|w| = R$, and that there are points of maximum modulus for every $R$, it follows that we can choose $\theta$ so that both $w_m$ and $w_M$ are contained in $C_R$, and such  that $C_R$ contains at least two points of maximum modulus which are at least $R/10$ apart from each other as well as from the boundary of $C_R$.
The claim follows from Corollary~\ref{cor: general covering dichotomy new try}.
\end{proof}

\begin{example}\label{example} It is not true in general that for any entire transcendental function $f$ there exists $R>0$ such that $f(A_R)$ covers $A_R$ arbitrarily many times. Indeed, let
\[
f(z)=\prod_{i=1}^\infty \frac{z_i-z}{z_i},
\]
where $z_i\rightarrow \infty$ very rapidly. For any $R$ the set $f(A_R)$ covers $A_R$ at most once. To see this, notice that for $|z_{j-1}| << |z| << |z_{j+1}|$ one obtains
$$
f(z) \sim c_j z^{j-1} \cdot \frac{z_j - z}{z_j},
$$
where $c_j=(z_0\cdot z_1\cdot\ldots \cdot z_{j-1})^{-1}$.
Let $w \in A_R$. By Rouch\'e's Theorem the difference between the number of solutions to the equation $f(z) = w$ on the two disks $\Delta(0,R/2)$ and $\Delta(0,2R)$ is at most $1$, hence $f(A_R)$ covers $A_R$ at most once.

We note however that this example does have infinite entropy. Indeed, for $|z_i| << R << |z_{i+1}|$ consider the image of the disk $\Delta(0,R)$. By the above estimates each point in $\Delta(0,R)$ will have exactly $i$ preimages in $\Delta(0,R)$, counted with multiplicity. By Theorem \ref{thm:covering implies entropy} the entropy of $f$ is at least $\log(i)$.
\end{example}

\subsection*{Final preparations}

We will make a few elementary observations before we start the proof of our main result.

\begin{lemma}\label{lem:proper maps} Let $N \in \mathbb N$, and let $D\subset \mathbb C$ be a bounded simply connected domain, let $f$ be a holomorphic function defined in a neighborhood of $\overline{D}$, and suppose that there exists $r>0$ such that $|f(z)|\ge r$ for all $z\in\partial D$.
If there exists $\xi\in \Delta(0,r)$ with $N$ preimages in $D$, then every point in $\Delta(0,r)$ has $N$ preimages in $D$ (counted with multiplicity).
\end{lemma}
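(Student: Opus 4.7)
The plan is to reduce the statement to the standard fact that a non-constant proper holomorphic map between connected plane domains has a well-defined topological degree, with every fiber containing the same number of points counted with multiplicity. To set this up, let $U := \{z \in D : |f(z)| < r\} = D \cap f^{-1}(\Delta(0,r))$, which is open in $\mathbb{C}$, and observe that $f(U) \subset \Delta(0,r)$. Any preimage in $D$ of a point $\xi \in \Delta(0,r)$ automatically lies in $U$, since $|f(z)| = |\xi| < r$ for such a preimage; counting preimages in $D$ is therefore equivalent to counting preimages in $U$.

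The key step is to verify that $f|_U : U \to \Delta(0,r)$ is proper. Given a compact $K \subset \Delta(0,r)$, set $r' := \max_{w \in K} |w| < r$ and consider a sequence $(z_n)$ in $f^{-1}(K) \cap U$. Since $D$ is bounded, after passing to a subsequence we may assume $z_n \to z \in \overline{D}$, and continuity gives $|f(z)| \le r' < r$. The boundary hypothesis $|f| \ge r$ on $\partial D$ then forces $z \notin \partial D$, so $z \in D$ and hence $z \in U$. Thus $f^{-1}(K) \cap U$ is closed in $\mathbb{C}$ and bounded, hence compact.

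Since $f$ cannot be constant on $D$ (a constant map could not satisfy the hypothesis and simultaneously have a point with only $N$ preimages), each connected component of $U$ is mapped properly and non-constantly onto $\Delta(0,r)$ by $f$, making $f|_U$ a branched covering of some finite degree $d$; consequently every point of $\Delta(0,r)$ has exactly $d$ preimages in $U$ counted with multiplicity. Applying this to $\xi$ pins $d = N$, which is the desired conclusion. The only mildly delicate point is the verification of properness, where the hypothesis $|f| \ge r$ on $\partial D$ is used precisely to prevent preimages of compact subsets of $\Delta(0,r)$ from accumulating on $\partial D$; after that, the invariance of the degree across fibers is entirely standard.
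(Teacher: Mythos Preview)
Your proof is correct, but the paper takes a more direct route: for any $w\in\Delta(0,r)$ it sets $g_w:=f-w$, observes that $|g_w-f|=|w|<r\le|f|$ on $\partial D$, and concludes by Rouch\'e's Theorem that $g_w$ and $f$ have the same number of zeros in $D$; since $g_\xi$ has $N$ zeros, so does every $g_w$. This is a one-line argument using only the boundary estimate.

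Your approach instead passes to the sublevel set $U=D\cap\{|f|<r\}$, verifies properness of $f|_U:U\to\Delta(0,r)$ using the same boundary estimate, and then invokes the fact that a proper holomorphic map between plane domains is a branched covering of constant degree. This is more conceptual and arguably explains \emph{why} the count is constant (connectedness of the target), whereas the paper's Rouch\'e argument is shorter and avoids any discussion of the topology of $U$ or the branched-covering machinery. Both arguments use the hypothesis $|f|\ge r$ on $\partial D$ in exactly the same way, to prevent preimages from escaping through $\partial D$; they simply package the consequence differently.
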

\begin{proof}
For  $w\in\Delta(0,r)$ let  $g_w=f-w$.  We claim that $g_w$ has the same number of zeroes (counted with multiplicity) as $f$. Observe that  $|g_w-f|<|f|$ on $\partial D$  because $|w|<r$ and $|f|\ge r$ on $\partial D$. The claim follows by Rouch\'e's Theorem since the function $g_{\xi}$ has $N$ zeroes.
\end{proof}

\begin{lemma}\label{lem:lemma for the first case} Let $R,j>4$. There exists $d>0$ such that the following holds. Let $z_1,z_2\in C_R$, $ \alpha\in A_R$,   and assume that $z_i\notin\Delta(\alpha,R/20)$ for $i =1, 2$. Then there exists a simply connected open set  $D\subset D_R\setminus \Delta(\alpha, 1/R^{j/2})$ with $z_1, z_2 \in D$ such that $\dist_D(z_1,z_2) < d/2$.
\end{lemma}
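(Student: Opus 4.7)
The plan is to build $D$ by removing from $D_R$ the closed disk $\overline{\Delta}(\alpha,1/R^{j/2})$ together with a straight-segment slit $L$ chosen so that $D$ is simply connected while $z_1$ and $z_2$ can still be joined by an explicit short path.

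First, I would connect $z_1$ to $z_2$ by a simple curve $\gamma$ lying in $C_R\setminus\overline{\Delta}(\alpha,R/40)$. Since $C_R$ is a simply connected sector-annulus of angular width $4\pi/3$, and $z_1,z_2\in C_R\setminus\Delta(\alpha,R/20)$, such a $\gamma$ can be built explicitly as a concatenation of circular subarcs of $\{|z|=|z_1|\}$ or $\{|z|=|z_2|\}$ with one radial subsegment, of total Euclidean length $O(R)$. If $\overline{\Delta}(\alpha,R/40)\Subset C_R$, there are two homotopy classes of such $\gamma$; I pick one that does not wind around $\alpha$. In all cases $\gamma$ is free of loops around $\alpha$.

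Next, I would pick a straight segment $L$ from a point of $\partial\overline{\Delta}(\alpha,1/R^{j/2})$ to $\partial D_R$. Since $\gamma$ is compact and does not wind around $\alpha$, the set of directions $\phi\in S^1$ for which the ray $\{\alpha+te^{i\phi}:t>0\}$ hits $\gamma$ is contained in a proper closed arc $I\subsetneq S^1$; hence I may choose $\phi\notin I$ with positive angular distance from $I$, ensuring that the corresponding segment $L$ stays at Euclidean distance $\gtrsim R$ from $\gamma$. Setting $D:=D_R\setminus(\overline{\Delta}(\alpha,1/R^{j/2})\cup L)$, the arc $L\cup\overline{\Delta}(\alpha,1/R^{j/2})$ joins an interior point of $D_R$ to $\partial D_R$, so $D$ is simply connected; it is contained in $D_R\setminus\Delta(\alpha,1/R^{j/2})$, and $z_1,z_2$ together with the path $\gamma$ all lie in the same component.

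Finally, I would bound $\dist_D(z_1,z_2)$ using the Koebe-type estimate $\rho_\Omega(z)\leq 2/\dist(z,\partial\Omega)$ valid for any simply connected proper subdomain $\Omega\subsetneq\C$. Along $\gamma$ the Euclidean distance to $\partial D$ is bounded below by a positive constant multiple of $R$: to $\partial D_R$ because $C_R$ stays at distance $\geq R/10$ inside $D_R$ once $R>4$; to $\overline{\Delta}(\alpha,1/R^{j/2})$ because $\dist(\gamma,\alpha)\geq R/40$; and to $L$ by construction. Combined with $\operatorname{length}_{\mathrm{eucl}}(\gamma)=O(R)$, this gives $\dist_D(z_1,z_2)\leq\int_\gamma\rho_D(z)\,|dz|\leq C$ for an absolute constant $C$, and one sets $d:=2C$. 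The main obstacle is the coherent choice of slit $L$ in all configurations of $\alpha$ relative to $D_R$ (interior, close to $\partial D_R$, or in the angular gap) and in both cases where $z_1,z_2$ lie on the same or on opposite sides of $\alpha$; the non-winding property of $\gamma$ is precisely what reduces all these configurations to the single Koebe-type estimate above.
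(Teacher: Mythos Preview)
Your approach is correct in outline but differs from the paper's. The paper argues by a three-case analysis depending on the position of $\alpha$: if $\Delta(\alpha,R/20)$ misses $D_R$ one takes $D=D_R$; if it misses $C_R$ one takes $D$ to be an $R/20$-tubular neighborhood of $C_R$; and if it meets $C_R$ one removes the \emph{large} disk $\Delta(\alpha,R/20)$ together with one of four canonical arcs (two radial, two circular) emanating from $\alpha$, the choice depending on where $z_1,z_2$ sit. The uniform bound on $\dist_D(z_1,z_2)$ then comes essentially from having finitely many model shapes after rescaling. Your construction is more unified: a single slit domain $D=D_R\setminus(\overline{\Delta}(\alpha,1/R^{j/2})\cup L)$ together with an explicit path $\gamma$ and the density estimate $\rho_D(z)\le 2/\dist(z,\partial D)$. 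What you gain is an explicit mechanism for the bound (Koebe plus $\mathrm{length}(\gamma)=O(R)$ and $\dist(\gamma,\partial D)\gtrsim R$); what the paper gains is that the finitely many arc choices make the uniformity transparent without tracking constants.

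The one step in your argument that deserves more care is the claim that a straight slit $L$ can always be chosen with $\dist(L,\gamma)\gtrsim R$ uniformly. Knowing that the hitting directions lie in a proper closed subset of $S^1$ and picking $\phi$ at ``positive angular distance'' from it does not by itself give a \emph{uniform} lower bound: the gap in $S^1$ could in principle shrink with the configuration. This is exactly the place where the paper's finite menu of slits (two radial, two circular) pays off. Your version can be repaired either by a compactness argument after rescaling by $1/R$, or by replacing the straight segment by one of those four canonical arcs, which always leaves at least one choice at distance $\ge cR$ from $\gamma$. You correctly flag this as ``the main obstacle''; it is the only point where the proof is not yet complete as written.
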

 \begin{proof}
We consider three cases: \\
(i) $\Delta(\alpha, R/20)\cap D_R=\emptyset.$ Then we choose $D=D_R.$\\
(ii) $\Delta(\alpha, R/20) \cap C_R=\emptyset.$
Let $D$ be the tubular neighborhood of $C_R$ with radius $R/20.$\\
(iii) $\Delta(\alpha,R/20) \cap C_R$ is nonempty. Let $I_1\ldots I_4$ be   four arcs starting at $\alpha,$ two radial segments and two circular arcs, ending when they hit the boundary of $D_R$ (see Figure \ref{fig:rays} for an illustration). Then we let $D=D_R\setminus (\Delta(\alpha,R/20)\cup I_i)$ for a suitable $i$ depending on the position of the points $z_1, z_2$. It is clear that $D$ is simply connected, and that $i$ can be chosen to obtain a uniform bound on $\dist_D(z_1,z_2)$ not depending on the positions of $z_1, z_2$ and of $\alpha$.
\end{proof}

 \begin{figure}[hbt!]
\begin{center}
\def\svgwidth{0.5\textwidth}
\begingroup%
  \makeatletter%
   \setlength{\unitlength}{\svgwidth}%
  \makeatother%
  \begin{picture}(1,0.88338011)%
    \put(0,0){\includegraphics[width=\unitlength]{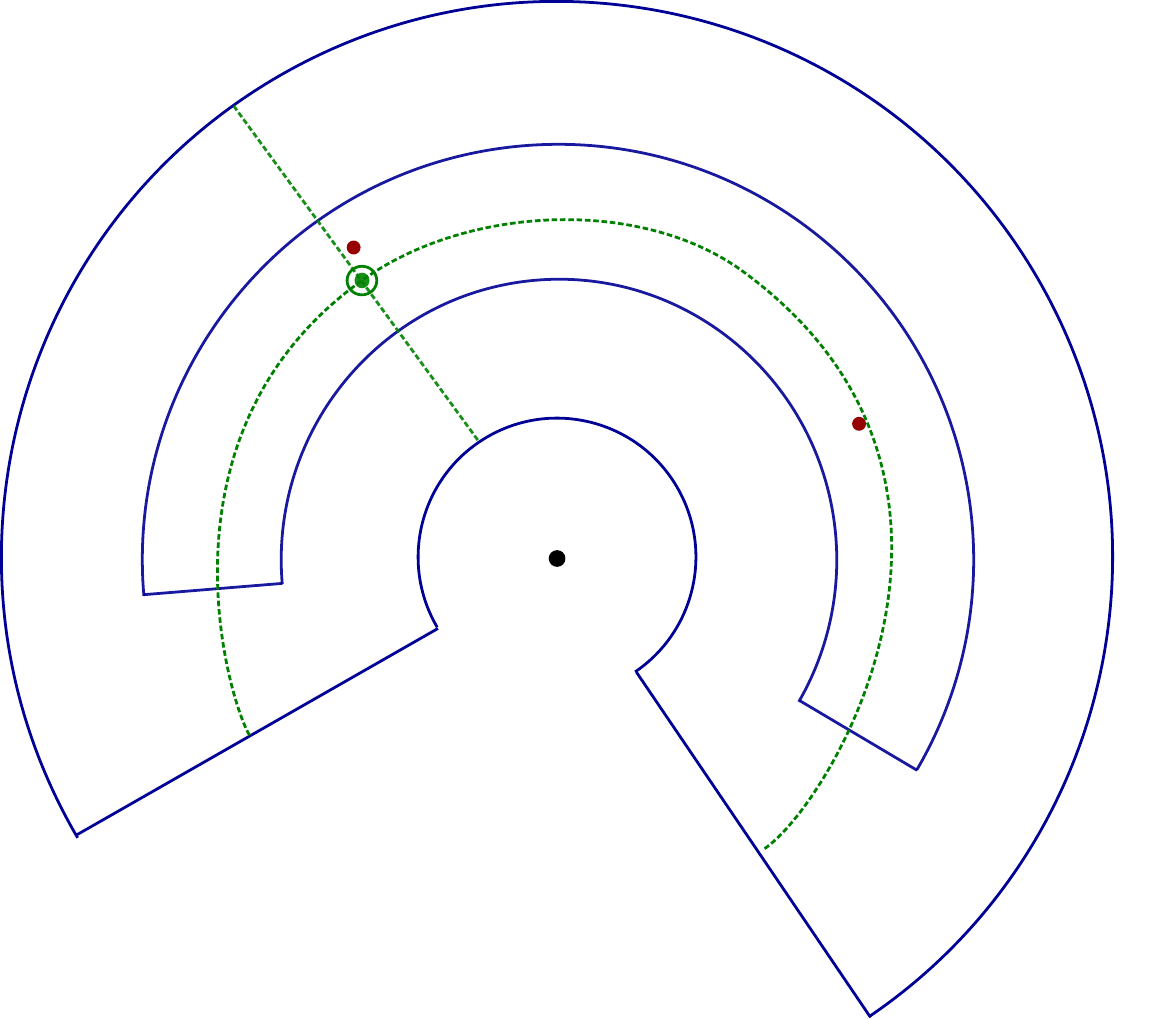}}%
    \put(0.43,0.36390268){\color[rgb]{0,0,0}\makebox(0,0)[lb]{{0}}}%
    \put(0.30810753,0.67805137){\color[rgb]{0,0,0}\makebox(0,0)[lb]{\small{$z_2$}}}%
    \put(0.75083728,0.51880158){\color[rgb]{0,0,0}\makebox(0,0)[lb]{\small{$z_1$}}}%
    \put(0.33774402,0.61959011){\color[rgb]{0,0,0}\makebox(0,0)[lb]{{$\alpha$}}}%
    \put(0.86690218,0.7391595){\color[rgb]{0,0,0}\makebox(0,0)[lb]{{$D_R$}}}%
    \put(0.84654992,0.28377819){\color[rgb]{0,0,0}\makebox(0,0)[lb]{{$C_R$}}}%
  \end{picture}%
\endgroup%
\end{center}
\caption{Illustration of the proof of Lemma~\ref{lem:lemma for the first case}.  In green: the point $\alpha$, the boundary of the disk $\Delta(\alpha, R/20)$ and the four arcs $I_i$.}
\label{fig:rays}
\end{figure}

\begin{lemma}\label{lem:lemma for the second case}
Let $\epsilon >0$ and $\ell \in \mathbb N$. Let $\alpha, z_1, z_2$ and $x_1, \ldots , x_{\ell}$ be points in  the annulus
$$A_R(\epsilon):=\{ R/2+\epsilon  R  \leq|z|\leq 2R-\epsilon  R   \}   $$
and assume that $|\alpha - z_j|$ and $|x_i - z_j| \ge \epsilon R$ for all $i = 1, \ldots \ell$ and $j = 1,2$.
Then there exists $d>0$, depending only on $\epsilon$ and $\ell$, and a simply connected domain $D \subset A_R$ avoiding all the points $x_i$ and satisfying
$$
D \cap \Delta(\alpha, \frac{1}{R^{j/2}}) = \emptyset,
$$
such that $\dist_D(z_1,z_2)<d/2$.
\end{lemma}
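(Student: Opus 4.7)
The plan is to rescale by $1/R$ to a fixed annulus, build a short piecewise-smooth path joining the rescaled target points, and take a thin tubular neighborhood of that path; rescaling back preserves hyperbolic distance by conformal invariance of the Poincar\'e metric. After dividing by $R$, the rescaled points $\tilde z_j = z_j/R$, $\tilde\alpha = \alpha/R$, $\tilde x_i = x_i/R$ lie in the fixed annulus $\tilde A := \{1/2 < |w| < 2\}$, in fact in $\{1/2 + \epsilon \le |w| \le 2 - \epsilon\}$, with $|\tilde z_j - \tilde x_i|, |\tilde z_j - \tilde\alpha| \ge \epsilon$. It suffices to build a simply connected $\tilde D \subset \tilde A$ containing $\tilde z_1, \tilde z_2$, avoiding each $\tilde x_i$ and the small disk $\Delta(\tilde\alpha, R^{-j/2-1})$, with $\dist_{\tilde D}(\tilde z_1, \tilde z_2) \le d/2$ for some $d = d(\epsilon, \ell)$; setting $D := R \tilde D$ finishes the lemma.

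The key construction is a piecewise-smooth simple arc $\gamma \subset \{1/2 + \epsilon/2 < |w| < 2 - \epsilon/2\}$ from $\tilde z_1$ to $\tilde z_2$ whose Euclidean length is bounded by $L = L(\epsilon, \ell)$ and which stays at Euclidean distance at least $\epsilon/4$ from each $\tilde x_i$ and from $\tilde\alpha$. One starts from a polygonal path inside the annular strip and, whenever it enters an $\epsilon/4$-neighborhood of one of the $\ell + 1$ obstructions, detours along the boundary of that small disk (cost at most $\pi\epsilon/2$ per detour). Smoothing the finitely many corners leaves the separations essentially unchanged while bounding the curvature of $\gamma$ in terms of $\epsilon$. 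Let $\tilde D$ be the Euclidean $\epsilon/8$-tubular neighborhood of $\gamma$; because $\gamma$ is a simple $C^1$ arc with bounded curvature and the tube radius is small compared to its reach, $\tilde D$ is an embedded open tube which deformation retracts onto $\gamma$ and is therefore simply connected. By construction $\tilde D$ avoids every $\tilde x_i$, and $\tilde D \cap \Delta(\tilde\alpha, \epsilon/8) = \emptyset$, which for $R$ large enough forces $\tilde D \cap \Delta(\tilde\alpha, R^{-j/2-1}) = \emptyset$.

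Every point of $\gamma$ sits at Euclidean distance at least $\epsilon/8$ from $\partial \tilde D$, so the standard hyperbolic density bound $\rho_{\tilde D}(w) \le 2/\dist(w, \partial \tilde D)$, valid on any hyperbolic planar domain, gives
\[
\dist_{\tilde D}(\tilde z_1, \tilde z_2) \le \int_\gamma \rho_{\tilde D}(w)\,|dw| \le \frac{16\, L(\epsilon, \ell)}{\epsilon} =: d/2.
\]
The main obstacle is the uniform-length arc construction, especially when several of the $\tilde x_i$ cluster together or lie near the straight segment $[\tilde z_1, \tilde z_2]$: the boundary-detour recipe above is meant to cope with every configuration, and this is the one step that uses essentially both that the number of obstructions is at most $\ell+1$ and that each separation is bounded below by $\epsilon$.
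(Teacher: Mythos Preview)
Your approach differs from the paper's. After the same rescaling to $R=1$, the paper takes $D$ to be $A_R$ with the disk $\Delta(\alpha,R^{-j/2})$ deleted together with, for each of the points $\alpha,x_1,\dots,x_\ell$, a slit (a radial segment, or a short circular arc followed by a radial segment) out to $\partial A_R$; the uniform bound $d=d(\epsilon,\ell)$ is then obtained not by any explicit estimate but by compactness of the configuration space of the points $\alpha,z_1,z_2,x_1,\dots,x_\ell$ in the closed annulus. Your tube-around-a-path construction is more hands-on and produces an explicit value of $d$; it also sidesteps the question of how the slits must be arranged so that the slit annulus is genuinely simply connected.

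Two points in your write-up need more care. First, the initial ``polygonal path inside the annular strip'' is not specified; the straight segment $[\tilde z_1,\tilde z_2]$ may well exit the strip $\{1/2+\epsilon/2<|w|<2-\epsilon/2\}$, so you should say explicitly that you start from, e.g., a radial segment followed by a circular arc followed by a radial segment. Second, and more importantly, bounded curvature of $\gamma$ does \emph{not} by itself ensure that the $\epsilon/8$-tube is embedded and hence simply connected: for that you need a lower bound on the \emph{reach} of $\gamma$, i.e.\ on how close non-adjacent pieces of $\gamma$ come to one another. If several obstruction disks $\Delta(\tilde x_i,\epsilon/4)$ overlap, or if a detour sends one stretch of $\gamma$ near another, the tube can pinch and acquire a hole. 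You assert that ``the tube radius is small compared to its reach'' but never justify it. The repair is routine---merge overlapping obstruction disks into clusters of diameter at most $(\ell+1)\epsilon/2$ before detouring around each cluster, and then check that a radial-plus-circular skeleton decorated with such detours has reach bounded below in terms of $\epsilon$ and $\ell$---but as written this step is a gap.
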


\begin{proof}
Up to rescaling we may assume that $R=1$.
Observe that for each choice of points $\alpha, z_1, z_2$ and $x_1, \ldots , x_{\ell}$ we can find such a simply connected domain $D$ containing $z_1, z_2$ by removing the disk $\Delta(\alpha, \frac{1}{R^{j/2}})$ and for each point $\alpha$ or $x_i$ a path connecting the point $\alpha$ or $x_i$ to $\partial A_R$. Each path can be chosen to be either a radial interval, or a combination of a small circular interval and a radial interval.

Note that the construction also works when the points lie in the closed annulus $\overline{A_R(\epsilon)}$
and that each construction gives uniform estimates on the hyperbolic distance between $z_1$ and $z_2$ for nearby locations of the points. Compactness of the initial conditions implies that the constant $d$ depends only on $\epsilon$ and $\ell$.

\end{proof}

\subsection*{Main statement and proof}

Let us recall the statement of our main Theorem:

\medskip

\noindent{\bf Theorem \ref{main}.} \emph{Let $f$ be a transcendental entire function, and let $N \in \mathbb N$. There exists a non-empty bounded open set $V \subset \mathbb C$ so that $V \subset f(V)$ and  such that any point in $V$ has at least $N$ preimages in $V$ under $f$, counted with multiplicity.
}

\medskip

\noindent \emph{Proof.}
Fix $N \in \mathbb N$.
Let $d/2$  be such that   Lemma~\ref{lem:lemma for the first case} and Lemma~\ref{lem:lemma for the second case} hold  for  $\ell = N$ and  $\epsilon = \frac{R}{2N(N+2)}$.
Observe that if Lemma~\ref{lem:lemma for the second case} is satisfied for   $d/2$ with $\ell=n$, it is also satisfied  for all $\ell<n$. Let $k$ be so that Lemma~\ref{lem:one large image for all} holds for  $d/2$. Let $j$ and $R$ large enough so that Corollary~\ref{cor: general covering dichotomy new try} holds for $k$. Choose $R, \theta,$ and $j$ such that the hypotheses in Theorem~\ref{thm:covering dichotomy} are satisfied.

It follows that  either $A_R \subset f(D_R)$, or  $f(D_R)$ covers $A_R\setminus \Delta(\alpha,1/R^{j/2})$ at least $N$ times  for some   $\alpha\in A_R\setminus f(D_R)$.

\medskip

\noindent{\bf Case I. : $f(D_R)\not\supset A_R$.}

In this case there exists $\alpha\in A_R\setminus f(D_R)$ such that  $f({D}_R)$ covers at least $N$ times the set $A_R  \setminus \Delta(\alpha,1/R^{j/2})$. Let us look for a subset of ${D}_R$ such that its image covers itself at least $N$ times.

If
\begin{equation}\label{eqtn:nonrecurrence}
f(\Delta(\alpha,1/R^{j/2})\cap D_R)\cap D_R = \emptyset,
\end{equation}
we can choose $V=D_R\setminus \Delta(\alpha, 1/R^{j/2})$ and
the proof is complete.
Hence we can assume that $f(\Delta(\alpha,1/R^{j/2})\cap D_R)\cap D_R \neq \emptyset$, and in particular that there exists a point $\xi \in \Delta(\alpha,1/R^{j/2})\cap D_R$ with $|f(\xi)|< 2R.$

We will also assume that $\Delta(\alpha,R/20)\Subset D_R$. Indeed, if this is not the case, the proof is completely analogues by replacing $D_R$ by a slightly larger  simply connected open set $\tilde{D}_R \subset \subset A_R$ for which $\Delta(\alpha,R/20) \subset \subset \tilde{D}_R$ is satisfied. In this case, if $f(\tilde{D}_R)$ keeps omitting $\alpha$ we apply the proof of case I, otherwise we move to case II.

Let  $w_M$ be a point in $C_R \setminus\Delta(\alpha, R/20)$ for which $|f(w_M)|\geq R^j $.
Recall that we may assume that such point exists, since by Remark~\ref{rem:position of wi} we can choose $C_R$ to contain at least two points of maximum modulus of distance at least $R/10$ apart from each other.

We claim that
there also exists a point $w_m\in C_R\setminus \Delta(\alpha,R/20)$ so that  $|f(w_m)|<3R.$
Let $w_m\in C_R$ be as in Remark~\ref{rem:position of wi}. If $\Delta(\alpha, R/20)\cap\partial C_R\neq\emptyset$, $w_m\in C_R\setminus \Delta(\alpha,R/20)$ as required. Otherwise $\Delta(\alpha, R/20)\Subset C_R$. In this case, let us assume by contradiction that
 $|f(z)|>3R$ for all $z\in C_R \setminus \Delta(\alpha,R/20)$. Then we also have that $|f(z)|\ge 3R$ on $\partial \Delta(\alpha, R/20)$.
By Lemma~\ref{lem:proper maps},  since there is $\xi\in  \Delta(\alpha,R/20) $ with  $|f(\xi)|<2R$,
we have that $f(\Delta(\alpha, R/20))\supset\Delta(0,3R)$.
This  contradicts the fact that  $\alpha\in \Delta(0,3R)$ was assumed not to lie in $f(D_R)$.

Now let $D$ be as in  Lemma~\ref{lem:lemma for the first case}, where $z_1 := w_M$ and $z_2 := w_m$. Since $A_R$ is not contained in $f(D)$, it follows by Corollary~\ref{cor: general covering dichotomy new try} that $f(D)$ covers $A_R\setminus \Delta(\alpha, R^{j/2})$ at least $N$ times. Since $D$ is contained in $A_R\setminus \Delta(\alpha, R^{j/2})$ this concludes the proof of case I.

\medskip

\noindent{\bf Case II: $f({D}_R)\supset A_R$.}  

Observe that for each fixed $N$, Theorem~\ref{thm:covering dichotomy} holds for  arbitrarily large radii $R$. If there is at least one of them for which case I holds, we are done. Otherwise, for every $R$ given by Theorem~\ref{thm:covering dichotomy} we have that $f(D_R)\supset A_R$ and hence that $f(A_R)\supset A_R$.

If there are arbitrarily large $R$ for which  $f(A_R)$ covers itself  at least $N$ times we are also done.  Hence we may assume that there exists $1 \le \ell < N$ such that for any of the $R$ given by Theorem~\ref{thm:covering dichotomy} there is a point $\alpha = \alpha(R)\in A_R$ which has at most $\ell$ preimages in $A_R$, counted with multiplicity. We can therefore find a sequence of values of $R$ for which the maximum number of preimages in $A_R$ of some point $\alpha$ is at most  $\ell$, and write $\zeta_1=\zeta_1(R),\dots, \zeta_\ell=\zeta_\ell(R) \in A_R $ for the preimages of $\alpha$ in $A_R$.

Let $ W:=A_R\cap\{z\in\C: |f(z)|<2R\}$. For $i=1\ldots \ell$ let $W_i$ be the connected component of $W$ which contains $\zeta_i$ (possibly, they are not all distinct). Now one of the two following cases occurs.

\medskip

\noindent {\bf Case IIa: For arbitrarily large $R$ there exists $R/2<r<2R$ such that the circle $\{|z| = r\}$ does not intersect the set $W$.}

We claim that if $R$ is chosen large enough then  $f(\Delta (0,r))$ covers $\Delta (0,r)$ at least $N$ times, giving the claim.
Let $v\in \mathbb C$ be a non-exceptional value for $f$ with $|v|<1$.
By Picard's theorem, $f$ takes on the value $v$ infinitely many times in any neighborhood of infinity, hence by choosing $R$ sufficiently large  we may assume that $v$ has at least $N$ preimages in the disk $\Delta (0,R/2)\subset \Delta(0,r)$.
Since $W$ does not intersect the circle $\partial\Delta(0,r)$, we have that $|f(z)|\geq 2R$ on $\partial\Delta(0,r)$.
Hence by Lemma~\ref{lem:proper maps}, in $\Delta (0,r)$, the function $f$ takes on any value in  $\Delta (0,2R)\supset\Delta(0,r)$ at least $N$ times, counted with multiplicity.

\medskip

\noindent {\bf Case IIb: for arbitrarily large $R$ the set $W$ intersects all circles $\{|z| = r\}$ for $R/2<r<2R$.}

Then there is some $W_i$, say $W_0$ up to relabeling, with  diameter at least $\frac{3R}{2\ell}$ for arbitrarily large $R$.

We claim that there exist $w_m,w_M\in A_R$ with $|f(w_m)|<2R$ and $|f(w_M)|>R^j, $ and such that $|w_m-\zeta_i|, |w_M-\zeta_i|>\frac{R}{2\ell(\ell+2)}$ for $i=1,\ldots, \ell$, and $|w_m-\alpha |, |w_M-\alpha |>\frac{R}{2\ell(\ell+2)}$.  We also claim  that the distance between $w_m,\ w_M$ and the boundary of $A_R$ is at least $\frac{R}{2\ell(\ell+2)}$.

Indeed, there are at most $\ell+1$ points in $W_0$ that need to be avoided (all of the $\zeta_i$ and $\alpha$), so   we can always find $w_m\in W_0$ which is  at Euclidean distance at least $\frac{\diamE W_0}{2(\ell+2)}>\frac{3R}{4\ell(\ell+2)}$ from all of the $\zeta_i$   and from $\alpha$,   as well as from the boundary of $A_R$.  By definition $|f(w_m)|<2R$.

To find $w_M$ it is enough to find a point of maximum modulus in $A_R$  minus  the set $U=\bigcup_i \Delta(\zeta_i, \frac{R}{2\ell(\ell+2)} )\cup \Delta(\alpha, \frac{R}{2\ell(\ell+2)})$, and which is at distance at least $\frac{R}{2\ell(\ell+2)}$ from $\partial A_R$. This means that  we have to avoid at most $\ell+2$ disks of diameter $ \frac{R}{\ell(\ell+2)}$, hence there are circles in  $A_R\setminus U$ in which we can choose a point of maximum modulus as required, which settles the claim.

By Lemma~\ref{lem:lemma for the second case} and our choice of $d$ in the beginning of the proof, we can find $D\subset A_R$ simply connected with $w_m,w_M\in D$ and $\zeta_i \notin D$ for $i = 1, \ldots \ell$, and with $D \cap \Delta(\alpha, \frac{1}{R^{j/2}}) = \emptyset$, and such that $\dist_D(w_m,w_M)<d/2$. By our choice of the constants $k$, $j$ and $R$ Corollary~\ref{cor: general covering dichotomy new try} holds, and since $f(D)$ omits $\alpha$ by construction, we have that    for $R$ sufficiently large  $f(D)$ covers at least $N$ times the set
$$
A_R\setminus \Delta(\alpha,1/ R^{j/2})\supset D.
$$

\hfill $\square$

Let us observe that each of the three cases I, IIa and IIb can occur. Indeed, case I occurs when $f$ has an omitted value \cite{BFPViet}; case IIa occurs in Example~\ref{example}; and case IIb occurs for $f(z)=e^z$.

\section*{Appendix: Topological entropy on $\mathbb C$.}\label{sect:entropy}

For maps acting on compact spaces the concept of topological entropy has been introduced in \cite{Adler}. In the literature there are several non-equivalent  natural generalizations for the definition of topological entropy on non-compact spaces (see for example
\cite{Bowen}, \cite{Bowen71}, \cite{Bowen73}, \cite{Hofer}, and more recently \cite{Hasselblatt}). We will use the following:

\begin{defn}[Definition of topological entropy]\label{defn:entropy noncompact}
Let $f:Y \rightarrow Y$ be a self-map of a metric space $(Y, d)$.
Let $X$ be a compact subset of $Y.$ Let   $n \in \mathbb N$ and $\delta > 0$.
A  set $E\subset X$ is called \emph{$(n,\delta)$-separated}  if \begin{itemize}
\item for any  $z\in E$, its orbit $\{z, f(z),\dots, f^{n-1}(z)\}\subset X$;
\item  for any $z\neq w\in E$ there exists $k\leq n-1$ such that
$d(f^k(z),f^k(w))> \delta$.
\end{itemize}
Let $K(n, \delta)$ be the maximal cardinality of an $(n,\delta)$-separated set.
Then the \emph{topological entropy} $\htop(X,f)$ is defined as
$$
\htop(X,f):=\sup_{\delta>0}\left\{\limsup_{n\ra\infty}\frac{1}{n}\log K(n,\delta)\right\}.
$$
We define the topological entropy  $\htop(f)$ of $f$ on $Y$ as the supremum of $\htop(X,f)$ over all compact subsets $X\subset Y.$
\end{defn}

When $Y$ is compact the definition coincides with the usual definition. In the literature the finite orbits  $\{z, f(z),\dots, f^{n-1}(z)\}$ are often not required to remain in $X$. A disadvantage of this definition is that the entropy is then dependent on the metric; for example, the entropy of a polynomial acting on the complex plane is then infinite with respect to the Euclidean metric. Our definition above, which may give a smaller value for the entropy, is independent of the metric inducing the topology, and is invariant under topological conjugation.

We are now ready to prove Theorem~\ref{thm:covering implies entropy}. The ideas of the proof are similar to the ideas used in \cite{MisiurewiczPrzytycki}.

\begin{proof}[Proof of Theorem~\ref{thm:covering implies entropy}] Denote the set of critical points of $g$ in $\overline{V}$ by $\mathcal{C}$. Note that $\mathcal{C}$ is finite. Let $\mathcal{C}_0 \subset \mathcal{C}$ contain only those critical points that are not periodic. Write $D$ for the product of the local degrees of $g$ at the critical points in $\mathcal{C}_0$.

Fix a point $w \in V$, not contained in a periodic cycle containing a critical point. It follows that all inverse orbits of $w$ avoid a sufficiently small neighborhood of each super-attracting periodic cycle. Let us denote the complement of these neighborhoods in $V$ by $V'$.

Let $m \in \mathbb N$, and let $\rho=\rho(m)>0$ be such that for every $x \in \mathcal{C}_0$ and every $n = 1, \ldots , m$ we have 
$$
g^n(\Delta(x,\rho)) \cap \Delta(x,\rho) = \emptyset.
$$
Such $\rho$ can be chosen by finiteness of $\mathcal{C}_0$, and since the points $x$ are not periodic. By decreasing $\rho>0$ if necessary we may assume that the disks $\Delta(x,\rho)$ are pairwise disjoint. 

There exists an $\epsilon=\epsilon(m)>0$ such that the following two properties hold: For each $y \in V' \setminus \bigcup_{x \in \mathcal{C}_0} g (\Delta(x, \rho))$ there are at least $N$ preimages of $y$ that are $\epsilon$-separated. 
 On the other hand, if $y \in g(\Delta(x, \rho))$ then the number of preimages (counted with multiplicity) near $x$ that are not $\epsilon$-separated is at most the local degree of $g$ at $x$, and the other preimages have distance at least $\epsilon$ to the preimages near $x$.

Consider a finite inverse orbit $y_0, y_{-1}, y_{-m}$ of a point $y_0 \in V'$. By the estimates on the number of preimages that may not be separated, and by the fact that any inverse orbit of length $m$ enters each disk $\Delta(x,\rho)$ at most once, it follows that there are at most $D-1$ other inverse orbits of $y_0$ of length $m$ that are not $\epsilon$-separated from $y_0$. Thus, a lower bound for the number of $\epsilon$-separated backwards $m$-orbits of $y_0$ is given by $\frac{N^m}{D}$.

Since the lower estimate holds for any $y \in V'$, it holds in particular for any point in $f^{-km}(w)$. Hence for any $k\in \mathbb N$, the number of $\epsilon$-separated backwards orbits of $w$ of length $km$ is at least
$$
\left( \frac{N^m}{D}\right)^k = \left( \frac{N}{D^{1/m}}\right)^{km},
$$
which is therefore a lower bound for $K(km,\epsilon)$, the maximal cardinality of a $(km,\epsilon)$-separated set. 
Since $D$ is a fixed constant and we can let $m$ converge to infinity as $\epsilon \rightarrow 0$, it follows that the topological entropy is at least $\mathrm{log}(N)$.
\end{proof}
\bibliographystyle{amsalpha}
\bibliography{entropy}

\end{document}